\documentclass[12pt]{article}
\usepackage[left=2.5cm,right=2cm,     top=2.5cm,bottom=2cm,bindingoffset=0cm]{geometry}
\usepackage{amsthm}
\usepackage{graphicx,textcomp}
\usepackage{amsmath,bm,amsfonts,mathrsfs,amssymb,textcomp}
\usepackage{hyperref}
\newtheorem{theorem}{Theorem}

\newtheorem{prop}[theorem]{Proposition}
\newtheorem{cor}[theorem]{Corollary}
\usepackage{graphicx,color}
\usepackage{enumitem}   
\usepackage{xcolor}

\usepackage[normalem]{ulem}

\begin{document}
\title{On non-homeomorphic surfaces with close DN maps}
\author{D.V. Korikov\thanks{PDMI RAS} \thanks{ITMO University} \thanks{e-mail: thecakeisalie@list.ru, \ ORCID:\href{https://orcid.org/0000-0002-3212-5874}{0000-0002-3212-5874}} \thanks{This work was supported by the Ministry of Science and Higher Education of the Russian Federation (agreement 075-15-2025-344 dated 29/04/2025 for Saint Petersburg Leonhard Euler International Mathematical Institute at PDMI RAS).}}
\maketitle
\begin{abstract}
Let $(M,g)$ be a genus $m$ surface with boundary $\Gamma$ and DN map $\Lambda$. Introduce the Schottky double $2M$ of $(M,g)$ and denote by $Sys(2M)$ the length of the shortest closed geodesics in the hyperbolic metrics on $2M$. We prove that $Sys(2M)$ is small if $\Lambda$ is close, in the operator norm, to the DN map $\Lambda_*$ of some surface $(M_*,g_*)$ of lower genus $m_*<m$ with the same boundary $\Gamma$:
$$\|\Lambda-\Lambda_*\|_{B(H^{1/2}(\Gamma);H^{-1/2}(\Gamma))}\to 0\,\Longrightarrow \ Sys(2M)\to 0.$$
\end{abstract}




\paragraph{Stability of determination of a surface from its DN map.} Let $(M,g)$ be an orientable surface (smooth two-dimensional Riemannian manifold) of genus $m$ with boundary $(\Gamma;dl)$ (in what follows, we assume that $\Gamma$ is diffeomorphic to a circle while the length element induced by the metric $g$ on $\Gamma$ coincides with $dl$). Introduce the Laplace-Beltrami operator $\Delta_g$ and denote by $u^f$ the harmonic extension of $f\in C^\infty(\Gamma)$ into $(M,g)$. Let $\nu$ be a unit outward normal on $\Gamma$. The {\it Dirichlet-to-Neumann map} (DN map) $\Lambda$ of $(M,g)$ is given by $\Lambda f:=\partial_\nu u^f|_\Gamma$. 

\smallskip

Let $(M',g')$ be another surface with boundary $(\Gamma',dl')$, Laplace-Beltrami operator $\Delta_{g'}$, and DN-map $\Lambda':\,f'\mapsto \partial_{\nu'}u^{'f'}|_{\Gamma'}$, where $u^{'f'}$ is the harmonic extension of $f'\in C^\infty(\Gamma';\mathbb{R})$ into $(M',g')$ and $\nu'$ is the exterior normal. If there is a confromal diffeomorphism $\beta:\,M\to M'$, $\beta^*g'=\rho g$, then the harmonic functions and DN-maps of $(M,g)$ and $(M',g')$ are connected via 
\begin{equation}
\label{DN conformal gauge}
\beta^*[u^{'f'}]=u^{\beta^*f'}, \qquad \Lambda'=\beta^{*-1}\frac{1}{\sqrt{\rho}}\Lambda\beta^*,
\end{equation}
where $\beta^*u':=u'\circ\beta$ is a precomposition. In particular, $\Lambda'=\Lambda$ if $(\Gamma',dl')=(\Gamma,dl)$ and $\beta$ does not move points of $\Gamma$. The well-known result of Lassas and Uhlmann \cite{LU} states that the converse is also true, i.e., the equalities $\Gamma=\Gamma'$ and $\Lambda'=\Lambda$ imply the existence of conformal diffeomorphism $\beta$ between $(M,g)$ and $(M',g')$ that does not move the points of $\Gamma$.

\smallskip

In \cite{BKor JIIPP stability,BKTeich}, it is proved that the closeness of the DN map $\Lambda$ of a surface $(M',g')$ (with the same genus and boundary as $(M,g)$) to the DN map $\Lambda$ in the operator norm implies the closeness of the conformal classes $\tau':=[(M',g')]$ and $\tau:=[(M,g)]$ of $(M',g')$ and $(M,g)$ in the Teichm\"uller metric $d_T$; the estimates of 
$$d_T(\tau',\tau)\le c(\Lambda)\|\Lambda'-\Lambda\|_{B(H^1{\Gamma},L_2(\Gamma)}$$ 
were derived in \cite{Kor CAOT}. This means that, if $\Lambda'$ is sufficiently close to $\Lambda$ and $M'$ is homeomorphic to $M$, then there is a diffeomorphism $\beta:\,M\mapsto M'$ and the positive function $\rho'$ on $M'$ such that $\beta$ is the identity on $\Gamma=\Gamma'$ and $$\|\beta^*(\rho'g')-g\|_{C(M)}\le c(\Lambda)\|\Lambda'-\Lambda\|_{B(H^1{\Gamma},L_2(\Gamma;dl)}.$$

\paragraph{Instability of surface topology under small perturbations of its DN map.} As noted above, the conformal class of a surface depends continuously on its DN map provided that the surface topology is fixed. In contrast to this, the topology of a surface is unstable under small perturbations of its DN map \cite{Kor ZNS}. Namely, by cutting small holes in the surface and then attaching small handles or M\"obius strips to the hole boundaries, one can obtain a surface whose DN map is arbitrarily close (in the operator norm) to the DN map of the original surface. Note that one cannot lower the genus of the surface or make orientable surface from a non-orientable one without significant perturbation of its DN map. For some other results on the stability/instability of the surface topology under a small perturbation of the DN map we refer the reader to Propositions 5.5, 6.2 and Theorem 6.3, \cite{Kor CAOT}.

\smallskip

This note addresses the question of whether the converse is true, namely, whether the closeness of the DN map $\Lambda'$ of a topologically perturbed surface $(M,g)$ to the DN map $\Lambda_*$ of $(M_*,g_*)$ implies that all the ``extra'' handles on $(M,g)$ are effectively separated by small closed curves from the ``exterior'' part of the surface. However, the DN map does not change if one enlarges small handles by multiplying the surface metric $g$ by a conformal factor which is large on the ``extra'' handles and equal to one on the boundary. Thus, to make the above discussion correct, the length of closed curves on $M$ should be defined in some ``canonical'' metric $h$ belonging to the same conformal class $[g]$ as the original metric $g$ on $M$. 

\begin{figure}[h!]
\center{\includegraphics[width=0.7\linewidth]{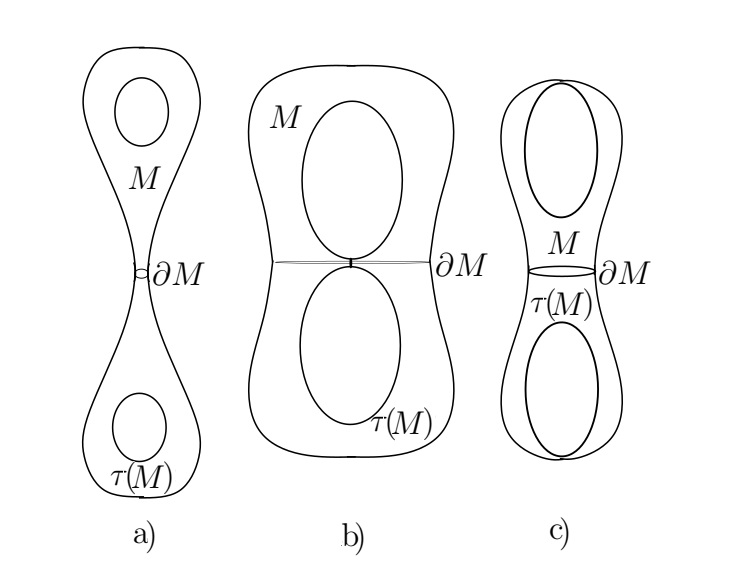}}
\caption{Different types of degeneration of $(2M,h)$ as $\Lambda\to\Lambda_*$, the genus 2 case.}
\label{degpic}
\end{figure}

\smallskip

The most natural choice is the {\it hyperbolic metric} on $M$ which is the metric $h\in[g]$ of constant scalar curvature $K=-1$ on $M$ such that the boundary $\partial M=\Gamma$ is a geodesic curve on $(M,h)$. This metric can be constructed as follows. As a rule, we assume that $M$ is endowed with the complex structure (maximal complex atlas) associated with the conformal class $[g]$ and the choice of the orientation on $M$. By attaching to $M$ its copy endowed with the opposite orientation, one obtains the {\it Schottky double} $2M$ of $M$ which is the Riemann surface endowed with the antiholomorphic involution $\tau$ that interchanges the points of the original and copy in such a way that $2M/\tau\cong M$ and $\partial M$ coincides with the set of fixed points of $\tau$. There is the unique (hyperbolic) metric $h$ on $2M$ of constant scalar curvature $-1$ compatible with the complex structure. Thus, $\tau^{'*}h=h$, i.e. the involution $\tau$ is an isometry on $(2M,h)$. Due to the last symmetry, a geodesic curve on $(2M,h)$ starting from any point $\partial M\subset 2M$ with the same tangent vector as $\partial M$, does not leave $\partial M$. So, the $\partial M$ is geodesic and the restriction of $h$ to $M$ (still denoted by $h$) is the required hyperbolic metric on $M\subset 2M$.

\smallskip

(Note that the embedding $(\Gamma,dl)$ into $2M$ is $C^{1+r}$-smooth if $g$ is $C^r$-smooth ($r\in (1,2)$). Indeed, near $\Gamma$ one can choose the local holomrphic coordinates $z(x):=iG(x,y)-\int_\cdot^x\star d_x G(x,y)$, where $y\in M\backslash\Gamma$, $G$ is the Green function of the Dirichlet Laplacian, and $\star$ is the Hodge star operator on $(M,g)$. Since $g$ is $C^r$-smooth, $G(\cdot,y)$ is $C^{1+r}$-smooth outside $y$ due to the local Schauder estimates of solutions to elliptic boundary value problems; see Theorem 6.2 and Lemma 6.4, \cite{Trudinger}. Then so is $z$, while the extension of $z$ onto the double $2M$ by the rule $z(\tau(x))=\overline{z(x)}$ provides the holomorphic coordinates on $2M$ that are $C^{1+r}$-smooth on each $M$, $\tau(M)$. Similarly, the embedding $(\Gamma,dl)$ into $2M$ is $C^{\infty}$-smooth if $g$ is $C^\infty$-smooth.) 

\smallskip

The {\it length spectrum} ${\rm Sp}(2M)$ of $2M$ is a collection of lengths of all closed geodesics in hyperbolic metric on $2M$. Denote by $Sys(2M)$ the infimum of ${\rm Sp}(2M)$. The result of this note is the following statement.

\begin{prop}
\label{main prop}
Let $\{(M_k,g_k)\}_{k=1}^{\infty}$ be a sequence of genus $m$ surfaces of  with the same {\rm(}connected{\rm)} boundary $(\Gamma,dl)$and let $\Lambda_k$ be DN map of $(M_k,g_k)$ and $2M_k$ be a double of $M_k$ equipped with the hyperbolic metric $h_k$. Suppose that 
\begin{equation}
\label{refined closeness}
\|\Lambda_k-\Lambda_*\|_{B(H^{1/2}(\Gamma);H^{-1/2}(\Gamma))}\to 0,
\end{equation}
where $\Lambda_*$ is a DN map of some surface $(M_*,g_*)$ of genus $m_*<m$ with boundary $(\Gamma,dl)$. 
Then $$Sys(2M_k)\to 0.$$
\end{prop}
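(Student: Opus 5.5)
Argue by contradiction. Suppose, after passing to a subsequence, $Sys(2M_k)\ge\varepsilon>0$ for all $k$. Each $2M_k$ is a closed Riemann surface of genus $2m$ with an antiholomorphic involution $\tau_k$ fixing the curve $\Gamma$. By Mumford's compactness theorem, the hyperbolic surfaces $(2M_k,h_k)$ with systole bounded below lie in a compact subset of moduli space. So, after a further subsequence, there are diffeomorphisms $\phi_k:2M\to 2M_k$ from a fixed model surface such that $\phi_k^*h_k\to h_\infty$ in $C^\infty$, where $h_\infty$ is a hyperbolic metric on $2M$. Since the involutions are isometries and the isometry groups converge, we can also arrange that $\phi_k^{-1}\tau_k\phi_k\to\tau_\infty$, an antiholomorphic isometric involution of $(2M,h_\infty)$. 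Its fixed set is a single geodesic $\gamma_\infty$ that separates the surface, because for each $k$ the fixed set of $\tau_k$ is the single separating geodesic $\Gamma$. Then $M_\infty:=2M/\tau_\infty$ is a genus $m$ hyperbolic surface with geodesic boundary, and the pulled-back halves $(M_k,h_k)$ converge smoothly to $(M_\infty,h_\infty)$.

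Next, compare DN maps. The DN map of $(M_k,g_k)$ in the original boundary parametrization $dl$ equals $\rho_k^{-1/2}\Lambda_k^{h}\rho_k^{-1/2}$ up to the gauge (\ref{DN conformal gauge}). Here $\Lambda^h_k$ is the DN map of $(M_k,h_k)$ with respect to its own arclength on $\partial M_k$, and $\rho_k$ is the conformal factor $h_k|_\Gamma=\rho_k\,dl^2$, which means we must also track the boundary identification map $\Gamma\to\partial M_k$. The limits need care. The hyperbolic boundary lengths $L_k$ of $\Gamma$ converge to $L_\infty$, and the boundary identifications $\psi_k:\Gamma\to\partial M_\infty$, which are diffeomorphisms, give a factor $\rho_k=|\psi_k'|^2$. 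I would show that $\psi_k$ stays bounded in $C^1$ with $|\psi_k'|$ bounded below. This should come from the hypothesis itself: $\Lambda_k$ converging to $\Lambda_*$ in $B(H^{1/2};H^{-1/2})$ controls the quadratic forms $\int_{M_k}|\nabla u^f|^2=(\Lambda_kf,f)$, which are conformally invariant Dirichlet energies on $M_\infty$ of $f\circ\psi_k^{-1}$. From this we get that $\psi_k$ acts boundedly on $H^{1/2}$ uniformly, hence (by Arzelà–Ascoli for quasisymmetric-type bounds) along a subsequence $\psi_k\to\psi_\infty$ a homeomorphism. Then the Dirichlet energy forms converge:
$$(\Lambda_*f,f)=\lim_k(\Lambda_kf,f)=\mathcal{D}_{M_\infty}\big(u^{f\circ\psi_\infty^{-1}}\big).$$
So the energy form of $M_*$ on $\Gamma$ coincides with that of $M_\infty$ transported by $\psi_\infty$. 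The energy form determines $\Lambda$ as an operator $H^{1/2}\to H^{-1/2}$, and hence determines the Lassas–Uhlmann data up to the boundary gauge.

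Finally, apply uniqueness. Equality of the energy forms, that is, of the DN maps as operators $H^{1/2}\to H^{-1/2}$ up to the boundary homeomorphism $\psi_\infty$, implies via Lassas–Uhlmann \cite{LU} (in its conformally invariant form: the space of boundary traces of holomorphic functions, or the Hardy-type data, determines the Riemann surface) that $M_*$ and $M_\infty$ are conformally equivalent. In particular they are homeomorphic, so $m_*=m$, contradicting $m_*<m$.

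The main obstacle is the middle step: controlling the boundary identifications $\psi_k$ and passing to the limit in the DN maps. The hyperbolic metric need not relate uniformly to $dl$ a priori, and we must use the operator-norm convergence to rule out degeneration of $\psi_k$, for instance concentration of boundary length. I would first try to show that the quadratic forms $(\Lambda_kf,f)$ are uniformly equivalent to the $\dot H^{1/2}(\Gamma)$ norm. Pulled back to the fixed limit surface $M_\infty$, this forces $\psi_k$ to be uniformly quasisymmetric, since degenerate circle maps distort the Douglas integral unboundedly. Compactness of normalized quasisymmetric maps then gives the limit $\psi_\infty$. Uniqueness in \cite{LU} for merely quasisymmetric boundary identification also needs justification: take harmonic conjugates, so that the traces of holomorphic functions on both surfaces match, and use these to build a conformal map, as in the Lassas–Uhlmann argument.
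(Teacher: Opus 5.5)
Your compactness step is acceptable: Mumford's criterion on the doubles, together with convergence of the isometric involutions, can replace the paper's use of the compactness theorem for manifolds with boundary. The step you yourself flag as the main obstacle, however, has a genuine gap.

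From $\|\Lambda_k-\Lambda_*\|\to 0$ you only extract that $f\mapsto f\circ\psi_k^{-1}$ and its inverse are uniformly bounded between $\dot H^{1/2}(\Gamma)$ and $\dot H^{1/2}(\partial M_\infty)$. Such bounds cannot give compactness of $\{\psi_k\}$. The Douglas integral, i.e.\ the Dirichlet energy of the disc, is \emph{exactly} invariant under M\"obius maps of the circle. So these bounds survive, up to fixed constants, precomposition of $\psi_k$ with arbitrary M\"obius maps. They therefore allow $\psi_k$ to crush all of $\Gamma$ except a shrinking arc onto one point of $\partial M_\infty$, which is precisely the concentration of hyperbolic boundary length you wanted to exclude.

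Two of your claims fail for this reason. First, it is false that degenerate circle maps distort the Douglas integral unboundedly. Second, compactness of \emph{normalized} quasisymmetric maps does not help: $\psi_k$ is dictated by the data, and $M_\infty$ has only finitely many conformal automorphisms, so there is nothing to normalize by.

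Ruling out the collapse needs the full operator-norm convergence plus a separate blow-up argument, which you do not supply. One would test $\Lambda_k-\Lambda_*$ on $f_k=g\circ\psi_k$; these concentrate at a point of $\Gamma$ where $M_*$ looks like a half-plane, and one would show that $M_\infty$ would then carry the boundary energy form of a disc.

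The $C^1$ bounds on $\psi_k$ you first aim for are out of reach anyway. Reparametrizing by boundary values of $(1+\epsilon_k)$-quasiconformal maps moves $\Lambda$ by $O(\epsilon_k)$ in norm while destroying $C^1$ control. Your final appeal to Lassas--Uhlmann with a merely quasisymmetric boundary identification is also unproven.

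The paper avoids the boundary identification altogether by using a conformally invariant quantity. The Hilbert transform $H=\partial_\gamma^{-1}\Lambda$ has, outside $\{0,\pm i\}$, eigenvalues of total multiplicity twice the genus. By (\ref{Hilbert conformal gauge}) these eigenvalues are unchanged under any conformal diffeomorphism, however wildly it reparametrizes the boundary.

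Norm convergence of $\Lambda_k$ gives $H_k\to H_*$ in $B(H^{1/2}(\Gamma))$, so by Kato's stability theorem $2(m-m_*)$ eigenvalues of $H_k$ tend to $\pm i$. If $Sys(2M_k)\ge\epsilon$, these eigenvalues coincide with those of the Hilbert transforms of $(M_\infty,\phi_k^*h_k)$ (your notation). Those transforms converge in norm to $H_\infty$, so their eigenvalues stay away from $\pm i$, a contradiction.

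Substituting this spectral comparison for your last two steps would close your argument with no control of $\psi_k$ and no uniqueness theorem.
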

This result can be interpreted as follows. Suppose that $\Lambda$ is close to the DN map of a surface of lower genus. Then there is the closed geodesic $\mathscr{L}$ on the double $(2M,h)$ of $M$ of small length $L$; without loss of generality, one can assume that $\mathscr{L}$ is simple. Due to the Collar Lemma (see, e.g., formula (A) in \cite{Buser}, Lemma 13.6, \cite{Farb}, and the proof of Corollary 4.4, \cite{Wang}) the ${\bf d}$-neighborhood $\mathscr{R}(\mathscr{L})$ of $\mathscr{L}$ is biholomorphic to a thin cylinder $(0,1)\times\mathbb{T}_{r}$, where $\mathbb{T}_r=\mathbb{R}/2\pi r$ and
$${\bf d}={\rm arcsinh}\Big(1\Big/{\rm sinh}\Big(\frac{L}{2}\Big)\Big)\simeq |{\rm log}(L)|\gg 1, \qquad r=\frac{L}{2\pi^2}\ll 1.$$
Since the current flowing across the curve is a conformal invariant, the intersection of $\mathscr{R}(\mathscr{L})$ with $M\subset 2M$ can be considered as a ``thin'' domain which suppresses the currents flowing through it, thereby effectively reducing the surface genus observed from the boundary by electric impedance tomography (see Fig. \ref{degpic}).

\paragraph{Connection between the spectrum of the Hilbert transform of a surface and the length spectrum of its double.} Let $(M,g)$ be a smooth surface with the connected boundary $(\Gamma,dl)$ and the DN map $\Lambda$. Choose a unit tangent vector $\gamma$ on $(\Gamma,dl)$; in what follows, we assume that the orientation on $(M,g)$ with boundary $(\Gamma,dl)$ is chosen in such a way that the pair $(\nu,\gamma)$ is positively oriented, where $\nu$ is the exterior normal. Denote by $\partial_\gamma$ the differentiation along the length on $\Gamma$ in the direction $\gamma$ and introduce the operator $\partial_\gamma^{-1}$ that vanishes on constants and inverts $\partial_\gamma$ on $L_2(\Gamma;dl)\ominus\mathbb{C}=\partial_\gamma H^{1}(\Gamma)$. The operator 
$$H:=\partial_\gamma^{-1}\Lambda$$ 
is called the {\it Hilbert transform} of $(M,g)$. As easily seen, $H$ admits an extension to a continuous operator acting in $H^{1/2}(\Gamma)$. There is the following connection between $H$ and the topology and the complex structure on $(M,g)$. Restricting the Cauchy-Riemann equations $d\Im\,w=\star d\Re\,w$ in $M$ to the boundary $\Gamma$, on obtains $\partial_\gamma\Im\,\eta=\Lambda\Re\,\eta$, $\partial_\gamma\Re\,\eta=-\Lambda\Im\,\eta$, where $\eta=w|_\Gamma$. Integration of these equations yield $\Im\,\eta=H\Re\,\eta+{\rm const}$, $(H^2+I)\Re\eta={\rm const}$. Thus, the Hilbert transform is the operator relating the real and imaginary parts of boundary traces of holomorphic functions on the surface. The eigenfunctions of $H$ corresponding to the eigenvalues $-i$ ($+i$) are all the boundary traces $\eta$ of holomorphic (antiholomprhic) functions $w$ on $(M,g)$ obeying $\eta\in H^{1/2}(\Gamma)$ and $(\eta,1)=0$. Outside $\lambda=\pm i$, the spectrum of $H$ is discrete and consists from zero (${\rm Ker}H=\mathbb{C}$) and the (semi-simple) eigenvalues 
$$\lambda_{\pm k}(H)=\pm\mu_k i(H), \qquad 0<\mu_1(H)\le\dots\le\mu_m(H)<1,$$
of total multiplicity $2m$, where $m$ is the genus of $M$ (Lemma 1, \cite{B}, see also Lemma 2.1, \cite{KorCJM}). The corresponding eigenfunctions $\eta_{\pm k}$ are $C^\infty$-smooth if so is the metric $g$.

\smallskip

The definition of the Hilbert transform implies its conformal invariance: if $\beta:\,(M,g)\to (M',g')$ is a conformal orientation-preserving (not necessarily smooth) diffeomorphism, $\beta^*g'=\rho g$, then from (\ref{DN conformal gauge}) and the rule $\partial_{\gamma'}=\beta^{*-1}\frac{1}{\sqrt{\rho}}\partial_{\gamma}\beta^*$ for differentiations along $(\Gamma'=\partial M',dl'=g'|_{T^*\Gamma'\otimes T^*\Gamma'})$ and $(\Gamma,dl)$, it follows that the Hilbert transforms $H$ and $H'$ of $(M,g)$ and $(M',g')$ are related via
\begin{equation}
\label{Hilbert conformal gauge}
\begin{split}
H'f&=\beta^{*-1}H\beta^*f+{\rm const}(f).
\end{split}
\end{equation}
Here the additional constants appear since $\beta^{*-1}:\,L_2(\Gamma,dl)\mapsto L_2(\Gamma',dl')$ is not isometric and, thus, it does not map $L_2(\Gamma,dl)\ominus\mathbb{C}$ (the image of $\partial_\gamma^{-1}$) to $L_2(\Gamma',dl')\ominus\mathbb{C}$ (the image of $\partial_{\gamma'}^{-1}$). In particular, the spectrum of is conformal invariant: indeed, $H\eta_k=\lambda_k(H)\eta_k$ ($\lambda_k\ne 0$) implies $H'(\beta^{*-1}\eta_k+{\rm const}(\eta_k)/\lambda_k(H))=\lambda(\beta^{*-1}\eta_k+{\rm const}(\eta_k)/\lambda_k(H))$.

In particular, the above spectral properties of $H$ remain valid even if the metric $g$ on $M$ is only $C^r$-smooth ($r\in(1,2)$). Indeed, as shown above, in this case there is the $C^{r+1}$-smooth embedding $\beta$ of $(M,g)$ into the its double $2M$ whence connection (\ref{Hilbert conformal gauge}) is valid for any Hilbert transform $H'$ of the surface $\beta(M)$ with the (smooth) boundary $\partial\beta(M)$.

\smallskip

Since the total multiplicity of the eigenvalues $\lambda_k(H)$ of $H$ different from $\pm i,0$ coincide with the doubled genus $m$ of a surface $(M,g)$, the pair $\lambda_{\pm m}$ of them tends to $\pm i$, respectively, as the surface degenerates (for example, one of its handles becomes small, as in the example of \cite{Kor ZNS}). In the rest of this note, we prove that the converse is also true in the following sense.
\begin{prop}
\label{real prop}
Let $\{(M_k,g_k)\}_{k=1}^{\infty}$ be a sequence of genus $m$ surfaces of  with the same {\rm(}connected{\rm)} boundary $(\Gamma,dl)$ and let $H_k$ be Hilbert transforms of $(M_k,g_k)$ and $2M_k$ be a double of $M_k$ equipped with the hyperbolic metric $h_k$. Denote by $\lambda_{m}(H_k)\ne i$ the eigenvalue of $H_k$ closest to $i$. Then 
$$\lambda_{m}(H_k)\to i \ \Rightarrow \ Sys(2M_k)\to 0.$$
\end{prop}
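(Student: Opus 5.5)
We argue by contradiction combined with compactness in moduli space. Suppose that $\lambda_m(H_k)\to i$ but $Sys(2M_k)\ge \epsilon>0$ along a subsequence. By Mumford's compactness theorem, the hyperbolic doubles $(2M_k,h_k)$ then stay in a compact part of the moduli space of genus $2m$ surfaces. The antiholomorphic involutions $\tau_k$ are isometries of $h_k$, and the isometry group of a compact hyperbolic surface is finite and controlled. Hence, after passing to a further subsequence and composing with diffeomorphisms, we may assume that $2M_k\to 2M_\infty$ smoothly in the hyperbolic metrics and that $\tau_k\to\tau_\infty$. Here $\tau_\infty$ is an antiholomorphic isometric involution whose fixed set is the limit of the geodesics $\Gamma_k=\partial M_k$. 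The lengths $\ell_{h_k}(\Gamma_k)$ are bounded below by $\epsilon$. They are also bounded above, because otherwise a collar argument on the compact double would give a contradiction; alternatively, an upper bound follows from the bounded geometry of the limit. So $\Gamma_\infty$ is a simple closed geodesic that separates $2M_\infty$ into $M_\infty$ and $\tau_\infty(M_\infty)$, and $M_\infty$ is a genus $m$ surface with connected geodesic boundary. We thus obtain diffeomorphisms $\beta_k:M_\infty\to M_k$ that are $(1+o(1))$-quasiconformal and converge smoothly to conformal maps with respect to the hyperbolic metrics.

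Next we transfer the spectral information. By (\ref{Hilbert conformal gauge}), the spectrum of $H_k$ is a conformal invariant, so we may compute it on $(M_k,h_k|_{M_k})$ pulled back to $M_\infty$ by $\beta_k$. We then need continuity of the nonreal spectrum of the Hilbert transform under smooth convergence of metrics on a fixed surface. This is the stability direction already available in the literature, since the DN maps, and hence the Hilbert transforms, depend continuously on the metric in operator norm on a fixed surface (cf. \cite{BKor JIIPP stability,Kor CAOT}). The boundary identification must be handled with care. We reparametrize by $h$-arclength on $\Gamma_\infty$ and use that, by (\ref{Hilbert conformal gauge}), the operators differ only by conjugation by the boundary diffeomorphism plus rank-one constants, which do not affect the eigenvalues away from $0$.

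Concretely, take the normalized eigenfunctions $\eta_k$ with $H_k\eta_k=\lambda_m(H_k)\eta_k$. By the characterization of the eigenvalues, they correspond to pairs of a holomorphic function $w_k$ and an antiholomorphic function on $M_k$. More precisely, $\eta_k$ can be written as $\eta_k=a_k+b_k$, where $a_k$ and $\overline{b_k}$ are boundary traces of holomorphic functions, and the ratio of their $H^{1/2}$ norms is governed by $\mu_m$. When $\mu_m\to1$, the antiholomorphic part must vanish in the normalized limit. Passing to the limit on $M_\infty$, using elliptic estimates for the harmonic extensions that are uniform under the smooth convergence, we obtain an $\eta_\infty$ that satisfies $H_\infty\eta_\infty=i\eta_\infty$, with $\eta_\infty$ orthogonal to the limit of the space $\ker(H_k-i)$. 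A pure holomorphic trace always belongs to the $\pm i$ eigenspace. The resulting contradiction is that $\eta_\infty$ is a nonzero element of $\ker(H_\infty-i)$ orthogonal to it, or else that $\eta_\infty=0$. To exclude $\eta_\infty=0$, we normalize so that the harmonic extensions have unit Dirichlet energy and keep the energy from escaping to the boundary.

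The main obstacle is this last limiting step. The eigenvalues $\pm i$ lie in the essential spectrum, because they are infinite-dimensional eigenspaces. Consequently, convergence of the operators $H_k\to H_\infty$, even in norm, does not by itself prevent discrete eigenvalues from accumulating at $\pm i$. What really must be shown is that $\sup_k\mu_m(H_k)<1$. The plan is to use a quantitative version of Lemma 1 in \cite{B}. One models $\mu_j$ as the singular values of the compact operator $\Pi_+ \,\overline{\cdot}\,\Pi_+$ restricted to the $m$-dimensional complement. Equivalently, one uses the period matrix of $M$ relative to the boundary: the $\mu_j$ are determined by the holomorphic differentials on $2M$ odd or even under $\tau$, and $\mu_j\to1$ is equivalent to the imaginary part of a suitably normalized period matrix of $2M$ degenerating. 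The imaginary part of the period matrix depends continuously on the point in the compact part of moduli space and is positive definite, so the $\mu_j$ stay uniformly below $1$. This yields the contradiction.
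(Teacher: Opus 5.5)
Your compactness step (Mumford's criterion for the doubles, plus control of the involutions $\tau_k$) is sound in outline. So is your appeal to operator-norm continuity of the Hilbert transforms on a fixed surface. Together they parallel steps 1)--4) of the paper, which instead applies the compactness theorem for manifolds with boundary directly to $(M_k,h_k)$ and proves $\|\tilde H_k-H_\infty\|_{B(H^{1/2}(\Gamma_\infty))}\to0$ by hand.

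The gap is in how you finish. You assert that norm convergence cannot keep eigenvalues away from the essential points $\pm i$. You replace it with the claim that $\mu_j\to1$ is equivalent to degeneration of the imaginary part of a ``suitably normalized'' period matrix of $2M$, which positive definiteness then excludes. That matrix is never specified, and for the period matrix $\Pi$ in a symmetric basis the mechanism is wrong. By the Corollary after Proposition~\ref{isospectrlity}, $\mu_m\to1$ means some holomorphic differential on $2M$ has small periods along $a_1,\dots,a_m,b_1,\dots,b_m$ compared with its periods along $a_{m+1},\dots,a_{2m}$. Equivalently, the off-diagonal block of $\Pi$ (rows $b_1,\dots,b_m$, columns $a_{m+1},\dots,a_{2m}$) becomes nearly singular. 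This is compatible with $\mathrm{Im}\,\Pi>0$: a block-diagonal $\Pi$ has positive definite imaginary part. What actually rules out $\mu_m=1$ is the strict bound $\mu_m<1$ for each fixed surface. To use it at the limit you still need continuity of $\mu_m$.

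Your own step 2 already supplies this continuity once you count eigenvalues. Since $M_\infty$ has genus $m$, $H_\infty$ has exactly $2m$ eigenvalues $\pm i\mu_l(H_\infty)$ outside $\{0,\pm i\}$, with $\mu_l(H_\infty)<1$. Each $\tilde H_k$ also has exactly $2m$ such eigenvalues. Enclose the $\lambda_{\pm l}(H_\infty)$ by small circles whose discs avoid $0$ and $\pm i$. By norm convergence, for large $k$ the Riesz projections of $\tilde H_k$ over these circles have the same total rank $2m$ as those of $H_\infty$. Hence the circles contain all the nontrivial eigenvalues of $\tilde H_k$, and none of them can approach $i$. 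This counting is what lies behind the paper's appeal to Kato IV.3.5.

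Alternatively, you could stay in finite dimensions via Proposition~\ref{isospectrlity}. The $2m\times 2m$ auxiliary period matrix $\mathfrak B$ of $(M_\infty,\tilde h_k)$, in a fixed homology basis, depends continuously on $\tilde h_k$, and so do its eigenvalues.

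Finally, your eigenfunction paragraph rests on a false decomposition and would not work as written. An eigenfunction with eigenvalue $i\mu$, $0<\mu<1$, is linearly independent of $\ker(H-i)+\ker(H+i)$. So it is not a sum of traces of single-valued holomorphic and antiholomorphic functions on $M$; it is such a sum only if multivalued ones are allowed.
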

Note that, in the case $m_0=0$, $m=1$, Proposition \ref{real prop} is proved in \cite{arxiv small handles gen 1} by the use of explicit expressions for $b$-period matrices of the doubles $2M_k$ in terms of the Hilbert transforms $H_k$ provided by the results of \cite{KorCJM}. In addition, Proposition \ref{main prop} is a corollary of Proposition \ref{real prop}. Indeed, (\ref{refined closeness}) and the definition of Hilbert transforms imply
\begin{equation}
\label{hilbert closeness}
\|H_k-H_*\|_{B(H^{1/2}(\Gamma))}\to 0,
\end{equation}
where $H_*=\partial_\gamma^{-1}\Lambda_*$ is the Hilbert transform of $(M_*,g_*)$. Since the discrete spectra of the operators in Banach space are stable under small (in the operator norm) perturbations (see IV.3.5, Kato \cite{Kato}), formula (\ref{hilbert closeness}) implies that, in any domain $\mathscr{D}$ whose closure does not intersect the essential spectrum $\{-i,+i\}$ of $H_*$, the total multiplicities of eigenvalues of $H_k$ and $H_*$ lying in $\mathscr{D}$ do coincide for sufficiently large $k>k(\mathscr{D})$. Thus, the total multiplicity of all $\lambda_k(M_k)\in\mathbb{D}\backslash\{0\}$ is equal to $2m_*$ while the remaining eigenvalues $\lambda_{\pm (m_*+1)}(M_k),\dots,\lambda_{\pm m}(M_k)$ tend to $\pm i$, respectively.

\begin{proof}[Proof of Proposition \ref{real prop}] We prove Proposition \ref{real prop} by contradiction. Suppose that $\lambda_m(H_k)\to i$ but $Sys(2M_k)\not\to 0$; then, passing to the sub-sequence, one can assume that $Sys(2M_k)\ge\epsilon$ for all $k$, where $\epsilon>0$. Denote by $\mathcal{M}_m(\epsilon)$ the space of genus $m$ Riemann surfaces $M$ with connected smooth boundaries whose Schottky doubles $2M$ obey $Sys(2M)\ge \epsilon$. Then $\{M_k\}_{k=1}^\infty\subset\mathcal{M}_m(\epsilon)$.

\smallskip

1) Recall that, according to the well-known Mumford compactness criterion (Corollary 3, \cite{Mum}), for each $\epsilon>0$, $g>1$, the set $\{[X]\in\mathcal{M}_g \ | \ {\rm Sys}(X)>\epsilon\}$ is compact in the moduli space $\mathcal{M}_g$ of genus $g$ Riemann surfaces without boundaries. In what follows, we use the following natural extension of this criterion for general manifolds with boundaries obtained in Theorem 3.1 \cite{AKKYLT}. For a $d$-dimensional smooth compact connected Riemannian manifold $(M,g)$ (possibly, with non-empty smooth boundary) and a point $x\in M$, let us introduce the (radial) geodesic coordinate $\alpha_x$ in the neighborhood of $x$ by $\alpha_x(y):=k$, where $k\in T_x M$ and $y={\rm exp}_x(k)$ is the end of the geodesic $\gamma_{x,\nu}:\,[0,1]\to M$ in $(M,g)$ with starting point $x$ and initial tangent vector $k$. The maximal $r\ge 0$ such that $\alpha_x$ provides an embedding of the neighborhood ${\rm dist}_g(y,x)<r$ in $(M,g)$ into $T_x M$ is called the {\it injectivity radius} of $(M,g)$ at $x$; its supremum over all $x\in M$ is denoted by $\iota_{(M,g)}$. If $\partial M\ne\varnothing$, then we introduce the boundary normal coordinates $\alpha_{\partial M}$ near $\partial M$ by $\alpha_{\partial M}(y):=n$, where $n\in N_x\partial M\subset N\partial M$ is the normal vector to $\partial M$ such that $y$ is the end of the geodesic $\gamma_{x,n}:\,[0,1]\to M$ in $(M,g)$ with starting point $x$ and initial tangent vector $n$. The {\it boundary injectivity radius} $\iota^{b}_{(M,g)}$ of $(M,g)$ is the maximal $r$ such that $\alpha_{\partial M}$ provides an embedding of the boundary collar ${\rm dist}_{g}(y,\partial M)<r$ into $N\partial M$. Then, for any $R_0,H_0,\iota_0,D_0>0$, the set $\mathcal{M}(d,R_0,H_0,\iota_0,D_0)$ of smooth $d$-dimensional manifolds $(M,g)$ obeying
\begin{align}
\label{curvatures}
\|{\rm Ric}_{(M,g)}\|_{L^{\infty}(M)},\,\|{\rm Ric}_{(\partial M,g|_{T\partial M})}\|_{L^{\infty}(\partial M)}\le R_0, \quad \|H_{\partial M}\|_{Lip(\partial M)}\le H_0,\\
\label{injectivities}
\iota_{(M,g)},\,\iota_{(\partial M,g|_{T\partial M})},\iota^{b}_{(M,g)}\ge\iota_0, \qquad {\rm diam}(M,g)\le D_0
\end{align}
(where ${\rm Ric}$ is the Ricci tensor of $(M,g)$, $|{\rm Ric}_{M}|:=\sqrt{R_{ij}R^{ij}}$ and $H$ is the mean curvature of $H_{\partial M}$) is pre-compact in $C^r$-topology ($0<r<2$): any sequence in $\mathcal{M}(n,r,R_0,H_0,\iota_0,D_0)$ contains a sub-sequence $\{(M_j,g_j)\}_{j=1}^{\infty}$ such that there are a $C^r$-smooth manifold $(M_\infty,g_\infty)$ and $C^r$-smooth diffeomorhpisms $\beta_j:\,M_\infty\to M_j$ obeying $\beta_j^*g_j\to g_\infty$ in $C^r(M_\infty,T^*M_\infty\otimes T^*M_\infty)$. Note that one can assume that the manifold $M_\infty$ itself is $C^\infty$-smooth (see, e.g., Theorem 5.11, \cite{Munk}) while $g_\infty\in\cap_{0<r<2}C^r(M_\infty,T^*M_\infty\otimes T^*M_\infty)$.

\smallskip

The above statement implies the following generalization of the Mumford comacteness criterion. Suppose that the metric on each surface $M\in\mathcal{M}_m(\epsilon)$ is obtained by restriction on $M$ of the hyperbolic metric $h$ on its Schottky double $2M$. Due to the connection ${\rm Ric}_{(M,g)}=K_g g$ between the Ricci and Gaussian curvatures valid in the two-dimensional case, one has ${\rm Ric}_{(M,h)}=-h$ and $\|{\rm Ric}_{M}\|_{L^{\infty}(M)}^2=2$. At the same time, in one dimension the Ricci curvature is zero $|{\rm Ric}_{(\partial M,h|_{T\partial M})}|=0$, and, since $\partial M$ is geodesic in the hyperbolic metric, its mean curvature is zero due to see Theorem 10.5.1, \cite{Bishop}. So, conditions (\ref{curvatures}) hold with $R_0=\sqrt{2}$ and $H_0=0$. Next, let $2M$ be a double of $M$ equipped with the hyperbolic metric $h$ and the anti-holomorphic (thus, isometric) involution $\tau$. Since there are no conjugate points on surfaces with negative curvature, the injectivity radius of $2M$ is a half of the length of the shortest closed geodesic curve on it, $\iota_{(2M,h)}=Sys(2M)/2$ while $\iota_{(M,h)}\ge \iota_{(2M,h)}\ge\epsilon/2$. Obviously, $\iota_{(\partial M,h|_{T\partial M})}\ge Sys(2M)/2\ge\epsilon/2$ since $\partial M$ is the close geodesic curve in $(2M,h)$. At the same time, if $L_1$ and $L_2$ are two geodesic segments starting from $x_1\in\partial M$ and $x_2\in\partial M$ in the normal direction, respectively, and ending at the common point $x$, then $L_1,L_2$ and $\tau(L_1),\tau(L_2)$ constitute the closed piece-wise geodesic curve $L$ whose length should be greater than $Sys(2M)$. This means that $\iota^{b}_{(M,h)}\ge Sys(2M)/4\ge\epsilon/4$. Finally, from the estimate
\begin{align*}
{\rm diam}(X,h)\,Sys(X)&\le C_0(g){\rm Vol}(X,h)=\\
=-C_0(g)\int_{X}K_h&\,dS_h=-2\pi\chi(X)C_0(g)=4\pi(g-1)C_0(g)=:C_1(g)
\end{align*}
valid for Riemann surfaces of genus $g>1$ endowed with the hyperbolic metrics $h$ (see lemma at p. 291, \cite{Mum}), one has ${\rm diam}(M,h)\le {\rm diam}(X,h)\le C_1(2m)/\epsilon$. Thus, conditions (\ref{injectivities}) hold with $\iota_0=\epsilon/4$ and $D_0=C_1(2m)/\epsilon$, whence the set 
$$\mathcal{M}_m(\epsilon)\subset\mathcal{M}(2,\sqrt{2},0,\epsilon/4,C_1(2m)/\epsilon)$$ 
is pre-compact in $C^r$-topology ($r\in (0,2)$). This means that each sequence in $\mathcal{M}_m(\epsilon)$ contains a sub-sequence $\{M_k\}_{k=1}^\infty$ such that there are a genus $m$ surface $(M_\infty,h_\infty)$ and $C^r$-diffeomorphisms $\beta_k:\,M_\infty\to M_k$ obeying $\beta_k^* h_k\to h_\infty$ in $C^r(M_\infty,T^*M_\infty\otimes T^*M_\infty)$, where $h_k$ is the hyperbolic metric on (the Schottky double of) $M_k$. So one can assume (passing to the sub-sequences) that this is true for the sequence $\{M_k\}_{k=1}^\infty\subset\mathcal{M}_m(\epsilon)$ from Proposition \ref{real prop}.

\smallskip

2) Denote $\Gamma_\infty:=\partial M_\infty$, $\tilde{h}_k:=\beta_k^* h_k$ and introduce the Laplacians 
$$\mathfrak{L}_k:=\frac{1}{\sqrt{|\tilde{h}_k|}}\partial_i\sqrt{|\tilde{h}_k|}\,\tilde{h}_k^{ij}\partial_j, \qquad \mathfrak{L_\infty}:=\frac{1}{\sqrt{|h_\infty|}}\partial_i\sqrt{|h_\infty|}\,h_\infty^{ij}\partial_j$$
on $(M_\infty,\beta_k^* h_k)$ and $(M_\infty,h_\infty)$, respectively; then $\lambda_-h_\infty^{ij}\xi_i\xi_j\le\tilde{h}_k^{ij}\xi_i\xi_j\le\lambda_+h_\infty^{ij}\xi_i\xi_j$ for any $\xi\in T^*M_\infty$, whence $\sqrt{|\tilde{h}_k|/|h_\infty|}\in[\lambda_+^{-1},\lambda_-^{-1}]$. Denote $Q_k:=\|\tilde{h}_k-h_\infty\|_{C^1(M_\infty,T^*M_\infty\otimes T^*M_\infty)}$; then $Q_k\to 0$. Denote the exterior unit normal and tangent vectors on $\Gamma_\infty$ corresponding to the metrics $\tilde{h}_k$ and $h_\infty$ by $\tilde{\nu}_k$, $\tilde{\gamma}_k$ and $\nu_\infty$, $\gamma_\infty$, respectively.

\smallskip

Denote by $v^f_k$ the harmonic extension of $f\in C^\infty(\Gamma_\infty)$ into $(M_\infty,\beta_k^* h_k)$, i.e., a solution to $\mathfrak{L}_k v^f_k=0$ in $M_\infty$, $v^f_k=f$ on $\Gamma_\infty$. Note that $v^f_k$ and $v^f$ are $C^{r+1}$-smooth due to the Schauder estimates of solutions to elliptic boundary value problems; see Theorem 6.2 and Lemma 6.4, \cite{Trudinger}. Similarly, denote by $v^f$ the harmonic extension of $f$ into the surface $(M_\infty,h_\infty)$. Then $v^f_k-v^f=\tilde{v}_k\in H^1_0(M_\infty)$. Since the multiplication by the $C^1$-smooth function is continuous in $H^{\pm 1}(M_\infty)$, we have
$$\|\mathfrak{L}_k\tilde{v}_k\|_{H^{-1}(M_\infty)}=\|(\mathfrak{L}_k-\mathfrak{L}_\infty)v^f\|_{H^{-1}(M_\infty)}\le c\,Q_k\|v^f\|_{H^1(M_\infty)}\le c(M_\infty,\mathfrak{L}_\infty)Q_k\|f\|_{H^{1/2}(\Gamma_\infty)}.$$
Integration by parts yields
$$\|\nabla_{\tilde{h}_k}\tilde{v}_k\|^2_{L_2(M_\infty;TM_\infty|\,\tilde{h}_k)}=-(\tilde{v}_k,\mathfrak{L}_k\tilde{v}_k)_{L_2(M_\infty;\tilde{h}_k)}\le c(\lambda_+,\mathfrak{L}_\infty)\|\tilde{v}_k\|_{H^{1}(M_\infty)}\|\mathfrak{L}_k\tilde{v}_k\|_{H^{-1}(M_\infty)}.$$
The Friedrichs constant $c_F(\tilde{h}_k)$ of $(M_\infty,\tilde{h}_k)$ obeys
$$c_F(\tilde{h}_k):=\inf_{0\ne v\in H^1_0(M_\infty)}\frac{\int_{M_\infty}\tilde{h}_k^{ij}\partial_i v\partial_j v\sqrt{|\tilde{h}_k|}dx}{\int_{M_\infty}v^2\sqrt{|\tilde{h}_k|}dx}\ge c_F(h_\infty)\lambda_-^2\lambda_+^{-1},$$
whence
$$\|\tilde{v}_k\|_{H^{1}(M_\infty)}\le c(\lambda_\pm,\mathfrak{L}_\infty)\|\nabla_{\tilde{h}_k}\tilde{v}_k\|_{L_2(M_\infty;TM_\infty|\,\tilde{h}_k)}.$$
Combining the last four estimates, one arrives at
\begin{equation}
\label{inner est}
\|\tilde{v}_k\|_{H^{1}(M_\infty)}\le c(\lambda_\pm,\mathfrak{L}_\infty)\|f\|_{H^{1/2}(\Gamma_\infty)}Q_k.
\end{equation}

Now, denote $dl=h_\infty|_{T\Gamma_\infty}$, then $\tilde{h}_k|_{T\Gamma_\infty})=\rho_k dl$, where $\rho_k\to 1$ in $C^1(\Gamma_\infty)$.
Since 
$$\int_{\Gamma_\infty}\partial_{\tilde{\nu}_k}\tilde{v}_k\,v\rho_k dl=(\mathfrak{L}_k\tilde{v}_k,v)_{L_2(M_\infty;\tilde{h}_k)}+(\nabla_{\tilde{h}_k}\tilde{v}_k,\nabla_{\tilde{h}_k}v)_{L_2(M_\infty;TM_\infty|\,\tilde{h}_k)},$$
for any $v\in C^{\infty}(M_\infty)$, we have
\begin{align*}
\Big|\int_{\Gamma_\infty}\partial_{\tilde{\nu}_k}\tilde{v}_k\,v\rho_k dl\Big|\le c(\lambda_\pm,\mathfrak{L}_\infty)(\|\mathfrak{L}_k\tilde{v}_k\|_{H^{-1}(M_\infty)}+\|\tilde{v}_k\|_{H^{1}(M_\infty)}\|v\|_{H^{1}(M_\infty)}\le \\
\le C(\lambda_\pm,\mathfrak{L}_\infty)\|f\|_{H^{1/2}(\Gamma_\infty)}Q_k\|v\|_{H^{1}(M_\infty)},
\end{align*}
due to (\ref{inner est}), whence
$$\|\partial_{\tilde{\nu}_k}\tilde{v}_k\|_{H^{-1/2}(\Gamma_\infty)}\le C(\lambda_\pm,\mathfrak{L}_\infty)\|f\|_{H^{1/2}(\Gamma_\infty)}Q_k
$$
due to the continuity of the multiplication by $C^{1}$-smooth function in $H^{1/2}(\Gamma_\infty)$. Similarly, since
\begin{align*}
\Big|\int_{\Gamma_\infty}(\rho_k\partial_{\tilde{\nu}_k}-\partial_{\nu_\infty})v^f\,vdl\Big|=(\mathfrak{L}_k v^f,v)_{L_2(M_\infty;\tilde{h}_k)}+(\nabla_{\tilde{h}_k}v^f,\nabla_{\tilde{h}_k}v)_{L_2(M_\infty;TM_\infty|\,\tilde{h}_k)}-\\
-(\nabla_{h_\infty}\tilde{v}_k,\nabla_{h_\infty}v)_{L_2(M_\infty;TM_\infty|\,h_\infty)},
\end{align*}
for any $v\in C^{\infty}(M_\infty)$, one has
$$
\|(\partial_{\tilde{\nu}_k}-\partial_{\nu_\infty})v^f\|_{H^{-1/2}(\Gamma_\infty)}\le C(\lambda_\pm,\mathfrak{L}_\infty)\|f\|_{H^{1/2}(\Gamma_\infty)}Q_k
$$
for large $k$. From the last two inequalities, it follows that
$$\|(\tilde{\Lambda_k}-\Lambda_\infty)f\|_{H^{-1/2}(\Gamma_\infty)}=\|\partial_{\tilde{\nu}_k}v_k^f-\partial_{\nu_\infty}v^f\|_{H^{-1/2}(\Gamma_\infty)}\le C(\lambda_\pm,\mathfrak{L}_\infty)\|f\|_{H^{1/2}(\Gamma_\infty)}Q_k,$$
whence
\begin{equation}
\label{closeness of DNs}
\|\tilde{\Lambda}_k-\Lambda_\infty\|_{B(H^{1/2}(\Gamma_\infty);H^{-1/2}(\Gamma_\infty))}\to 0.
\end{equation}

\smallskip

3) Let $\tilde{H}_k:=\partial_{\tilde{\gamma}_k}^{-1}\tilde{\Lambda}_k$ be the Hilbert transforms of $(M_\infty,\beta_k^* h_k)$ and let $\lambda_{\pm l}(\tilde{H}_k)$ be the eigenvalues of $\tilde{H}_k$ different from $\pm i,0$. Since $h_k\in[g_k]$, each diffeomorphism $\beta_k:\,(M_\infty,\beta_k^* h_k)\to (M_k,g_k)$ is conformal and formula (\ref{Hilbert conformal gauge}) yields
\begin{equation}
\label{change of variables}
H_k f=\beta_k^{*-1}\tilde{H}_k\beta_k^*f+{\rm const}(f) \qquad \forall f\in H^{1/2}(\Gamma).
\end{equation}
Note that each $\beta_k^*:\,H^{1/2}(\Gamma)\to H^{1/2}(\Gamma_\infty)$ is continuous due to the $C^r$-smoothness of $\beta_k$. Thus, due to (\ref{change of variables}), the spectra (eigenvalues and their multiplicities) of $H_k$ and $\tilde{H}_k$ (as operators acting in $H^{1/2}(\Gamma)$ and $H^{1/2}(\Gamma_\infty)$) do coincide,
\begin{equation}
\label{specta invariance}
\lambda_{\pm l}(\tilde{H}_k)=\lambda_{\pm l}(H_k) \qquad (l=1,\dots,m).
\end{equation}

\smallskip

4) Introduce the Hilbert transform $H_\infty:=\partial_{\gamma_\infty}^{-1}\Lambda_\infty$ and denote by $\lambda_{\pm l}(H_\infty)$ the eigenvalues of $H_\infty$ different from $\pm i,0$. The convergence $\rho_k\to 1$ in $C^1(\Gamma_\infty)$ implies
$$\|\partial_{\tilde{\gamma}_k}^{-1}-\partial_{\gamma_\infty}^{-1}\|_{B(H^{-1/2}(\Gamma_\infty);H^{1/2}(\Gamma_\infty))}\to 0.$$
Since $$\tilde{H}_k-H_\infty=(\partial_{\tilde{\gamma}_k}^{-1}-\partial_{\gamma_\infty}^{-1})\tilde{\Lambda}_k+(\partial_{\tilde{\gamma}_k}^{-1}-\partial_{\gamma_\infty}^{-1})(\Lambda_\infty-\tilde{\Lambda}_k)+\partial_{\gamma_\infty}^{-1}(\tilde{\Lambda}_k-\Lambda_\infty),$$
the above formula and convergence (\ref{closeness of DNs}) lead to
\begin{equation}
\label{defect closeness}
\|\tilde{H}_k-H_\infty\|_{B(H^{1/2}(\Gamma_\infty))}\to 0.
\end{equation}
Since the discrete spectra of the operators in Banach space are stable under small (in the operator norm) perturbations (see IV.3.5, Kato \cite{Kato}), formula (\ref{defect closeness}) implies that
$$\lambda_{\pm l}(\tilde{H}_k)\to \lambda_{\pm l}(H_\infty) \qquad (l=1,\dots,m).$$
Thus, all the $\lambda_{\pm l}(\tilde{H}_k)$ are not close to $\pm i$ for large $k$. Due to (\ref{specta invariance}), the same is valid for $\lambda_{\pm l}(H_k)$ which contradicts the assumption $\lambda_m(H_k)\to i$. Thus, $Sys(2M_k)\to 0$. Thereby Propositions \ref{real prop} and \ref{main prop} are proved. \end{proof}

\smallskip

\newpage

\paragraph{Connection between the spectrum of the Hilbert transform of a surface and the periods of Abelian differentials on its double.} Chose the {\it symmetric} canonical homology basis $(a_{+1},\dots,a_{2m},b_{+1},\dots,b_{2m})$ on the double $2M$ of $M$ (equipped with the involution $\tau$) in such a way that the loops $\{l_k:=a_{k},l_{m+k}:=b_{k}\}_{k=1}^m$ belong to $M\subset 2M$ while $\{a_{m+k},b_{m+k}\}_{k=1}^m$ belong to $\tau(M)$, and the following symmetry conditions hold
$$a_{m+k}(t)=\tau\circ a_{\pm k}(t), \quad b_{m+k}(t)=\tau\circ b_{\pm k}(1-t), \qquad (t\in[0,1]).$$
Let $\mathscr{D}$ be the space of harmonic differentials $q$ on $M$ which are normal to $\partial M$, $q(\gamma)=0$ ($\gamma$ is the unit tangent vector on $(\Gamma,dl)$). Chose the basis $\{q_i\}_{i=1}^{2m}$ in $\mathscr{D}$ dual to the homology basis $\{l_k:=a_{k},l_{m+k}:=b_{k}\}_{k=1}^m$, $\int_{l_j}q_i=\delta_{ij}$, and introduce the {\it auxiliary period matrix} $\mathfrak{B}$ with entries 
$$\mathfrak{B}_{ij}:=\int_{l_i}\star q_j$$
(here $\star$ is the Hodge operator). 

\begin{prop}
\label{isospectrlity}
The discrete spectrum of the Hilbert transform $H$ with zero excluded do coincide with the spectrum of the auxiliary period matrix $\mathfrak{B}$.
\end{prop}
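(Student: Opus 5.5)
The plan is to set up an explicit linear correspondence between eigenfunctions of $H$ with eigenvalue $\lambda\notin\{0,\pm i\}$ and eigenvectors of $\mathfrak{B}$, working with complex-valued functions and the complexified space $\mathscr{D}\otimes\mathbb{C}$.

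\textbf{From $H$ to $\mathfrak{B}$.} Let $H\eta=\lambda\eta$ with $\lambda\ne 0,\pm i$, and let $u=u^\eta$ be the harmonic extension of $\eta$. Applying $\partial_\gamma$ to the eigenvalue equation gives $\Lambda\eta=\lambda\partial_\gamma\eta$. On $\Gamma$ we have $(\star du)(\gamma)=\partial_\nu u=\Lambda\eta$ (with the orientation convention $(\nu,\gamma)$ positive) and $du(\gamma)=\partial_\gamma\eta$. Hence
$$q:=\star du-\lambda\,du$$
is a closed and coclosed, i.e. harmonic, differential on $M$ with $q(\gamma)=0$, so $q\in\mathscr{D}\otimes\mathbb{C}$.
\begin{itemize}
\item Since $du$ is exact, its periods vanish, so $c_i:=\int_{l_i}q=\int_{l_i}\star du$. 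Thus $q=\sum_j c_jq_j$.
\item Using $\star\star=-1$ on $1$-forms, $\star q=-du-\lambda\star du$, so $\int_{l_i}\star q=-\lambda c_i$.
\item Combining, $(\mathfrak{B}c)_i=\int_{l_i}\star q=-\lambda c_i$.
\end{itemize}
The vector $c$ is nonzero. Otherwise $\star du=dv$ for a single-valued harmonic $v$, so $\Lambda\eta=\partial_\gamma v$ and $H\eta=v|_\Gamma+{\rm const}$. Since $u+iv$ is then holomorphic, the Cauchy--Riemann relations recalled before Proposition~\ref{real prop} give $(H^2+I)\eta={\rm const}$. This forces $\lambda^2=-1$ unless $\eta$ is constant, and both alternatives are excluded.

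\textbf{From $\mathfrak{B}$ to $H$.} Conversely, let $\mathfrak{B}c=-\lambda c$ with $\lambda^2\ne-1$, and put $q=\sum_j c_jq_j$. Define
$$\omega:=-\frac{\lambda q+\star q}{1+\lambda^2}.$$
This form is harmonic, and its periods $-(\lambda c_i+(\mathfrak{B}c)_i)/(1+\lambda^2)$ vanish, so $\omega=du$ for a harmonic $u$. Inverting the relation above gives $\star du=\lambda du+q$; a direct check using $\star\star=-1$ confirms this. Evaluating on $\gamma$ and using $q(\gamma)=0$ yields $\Lambda\eta=\lambda\partial_\gamma\eta$ for $\eta=u|_\Gamma$. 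Hence $H\eta=\lambda\eta+{\rm const}$, and replacing $\eta$ by $\eta+{\rm const}/\lambda$ produces a genuine eigenfunction. The two constructions are linear and mutually inverse modulo constants, which are exactly ${\rm Ker}\,H$. So for every $\lambda\ne 0,\pm i$ the eigenspace of $H$ for $\lambda$ is isomorphic to the eigenspace of $\mathfrak{B}$ for $-\lambda$. Since the nonzero spectrum of $H$ is symmetric, $\lambda_{\pm k}=\pm i\mu_k$, the sign change is immaterial, or it is absorbed by the orientation convention for $\star$.

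\textbf{Multiplicities and exceptional values.} By the results quoted earlier, the eigenvalues of $H$ outside $\{0,\pm i\}$ are semisimple with total multiplicity $2m$, and $\mathfrak{B}$ is a $2m\times 2m$ matrix. The eigenspace isomorphism therefore shows that $\mathfrak{B}$ is diagonalizable with exactly this spectrum, \emph{provided} $\mathfrak{B}$ has no eigenvalues in $\{0,\pm i\}$.
\begin{itemize}
\item $\mathfrak{B}c=0$: then $\star q=dw$, and $q(\gamma)=0$ says $\partial_\nu w=0$. So $w$ is a Neumann harmonic function, hence constant, and $q=0$.
\item $\mathfrak{B}c=\mp ic$: then $q\pm i\star q$ (up to the correct sign) has zero periods, so it is $df$ with $f$ holomorphic or antiholomorphic. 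The boundary condition $q(\gamma)=0$ then forces $\Re f$ or $\Im f$ to be constant on $\Gamma$, hence $f$ constant and $q=0$.
\end{itemize}
This last step, excluding $\pm i$ from the spectrum of $\mathfrak{B}$ and keeping the sign and orientation conventions for $\star$ consistent, is where I expect the main care to be needed.
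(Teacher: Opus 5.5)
Your proof is correct. Its $\mathfrak{B}\to H$ half is the paper's: your $\omega=-(\lambda q+\star q)/(1+\lambda^2)$ is, up to a scalar, the paper's $(\star-\mu)q$ with $\mu=-\lambda$.

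The $H\to\mathfrak{B}$ half takes a shorter route. The paper applies the orthogonal decomposition $\mathscr{H}^1=d\mathscr{H}^0\oplus\star\mathscr{D}$ twice, writing $\star du^f=du^h+\star p$ and $\star du^h=du^r+\star q$. It then identifies $h$ and $r$ through the DN map and compares normal components on $\Gamma$ to get $p=\lambda q$, hence exactness of $(\star-\lambda)q$. You instead write down $q=\star du-\lambda du$ and check $q(\gamma)=\Lambda\eta-\lambda\partial_\gamma\eta=0$ directly. Since $(\star-\mu)^{-1}=-(\star+\mu)/(1+\mu^2)$, this is, up to a nonzero factor, the same form the paper ends with, so the decomposition is a detour.

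Your version has further advantages:
\begin{itemize}
\item Your explicit check that the two constructions are mutually inverse is what actually justifies the equality of multiplicities the paper asserts.
\item It isolates why $\lambda=\pm i$ must be excluded: there the eigenspaces of $H$ are infinite-dimensional and the correspondence fails. The paper's write-up passes over this point.
\item You use the quoted fact that the eigenvalues of $H$ off $\{0,\pm i\}$ are semisimple with total multiplicity $2m$. The paper recalls this but does not invoke it in the proof; with it you get diagonalizability of $\mathfrak{B}$ for free.
\end{itemize}

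Your last step needs a correction. For $\mathfrak{B}c=\mp ic$ the vector $c$ cannot be real, because $\mathfrak{B}$ is a real matrix. So $q$ is complex, and then $q(\gamma)=0$ only reproduces the boundary Cauchy--Riemann relation between $\partial_\nu f$ and $\partial_\gamma f$. It does not make $\Re f$ or $\Im f$ constant on $\Gamma$; that inference is valid only for real $q$.

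The step is superfluous anyway. Your eigenspace isomorphism, together with semisimplicity and total multiplicity $2m$, already yields $2m$ linearly independent eigenvectors of the $2m\times 2m$ matrix $\mathfrak{B}$, all with eigenvalues among the $-\lambda_{\pm k}$. That leaves no room for eigenvalues $0$ or $\pm i$, so your proviso holds automatically.

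If you want a self-contained argument, argue as follows. Since $d\bar q=0$ and $\bar q$ pulls back to zero on $\Gamma$, Stokes gives $\int_M df\wedge\bar q=\int_\Gamma f\bar q=0$. Here $df$ is a nonzero multiple of the $(1,0)$- (resp. $(0,1)$-) part of $q$, so this integral is a nonzero multiple of the squared $L_2$-norm of that part. Hence $q$ is of pure type, $\star q=\pm iq$. Then $q(\gamma)=0$ forces $q(\nu)=(\star q)(\gamma)=0$ on $\Gamma$, so $q\equiv 0$.
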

\begin{proof}
Suppose that $\mathfrak{B}T=\lambda T$, put $q=\sum_{j}T^j q_j$, and introduce the differential $\omega:=(\star-\lambda)q$. Then $\int_{l_i}\star\omega=[(\mathfrak{B}-\lambda)T]^i=0$, whence $\omega=du$ for some harmonic function $u=u^f$. In particular, one has 
$$\Lambda f=\partial_\nu u^f=\omega(\nu)=(\star q,\nu)-\lambda q(\nu)=-(q,\gamma)-\lambda q(\nu)=-\lambda q(\nu)$$ 
and 
$$\partial_\gamma f=\omega(\gamma)=q(\nu)-\lambda q(\gamma)=q(\nu)$$
due to $\star(\nu^\flat)=\gamma^\flat$. Combining the last two equalities, one arrives to $Hf=\partial_\gamma^{-1}\Lambda f=-\lambda f+{\rm const}$, where the constant can be excluded by the choice of $f$. Note that $f={\rm const}$ if and only if $q(\nu)=0$ i.e. $q\equiv 0$ (the latter is equivalent to $T=0$).

\smallskip

Conversely, suppose that $Hf=-\lambda f$, i.e. $\Lambda f=-\lambda\partial_\gamma f$. Recall the $L_2(M;T^*M)$-orthogonal decomposition
\begin{equation}
\label{ortodecomp}
\mathscr{H}^1=d\mathscr{H}^0\oplus\star\mathscr{D}
\end{equation}
of the space of harmonic 1-forms $\mathscr{H}^1=\{a \ | \ da=d\star a \text{ in } M\}$, where $\mathscr{H}^0=\mathscr{H}^1\cap dH^1(M)$ is the sub-space of potential fields. As a corollary, one has
$$\star du^f=du^h+\star p, \qquad \star du^h=du^r+\star q,$$
where $p,q=\sum_j T^jq_j\in\mathscr{D}$. Comparing the above equalities yields
$\star q-p=du^{r-f}$ while restricting them on $\Gamma$ yields
$$\Lambda r=[\star(du^h-q)](\nu)=-\partial_\gamma h, \qquad \Lambda h=-\partial_\gamma f=\lambda^{-1}\Lambda f,$$
whence $h=\lambda^{-1}f+{\rm const}$, $\Lambda r=-\lambda^{-1}\partial_\gamma f=\lambda^{-2}\Lambda f$, i.e. $r=\lambda^{-2}f+{\rm const}$. Therefore $\star q-p=(\lambda^{-2}-1)du^{f}$ and, in particular, one has
$$p(\nu)=-[\star q-p](\nu)=(1-\lambda^{-2})\Lambda f=(\lambda^{-2}-1)\lambda\partial_\gamma f$$
and
$$q(\nu)=[\star q](\gamma)=(\lambda^{-2}-1)\partial_\gamma f.$$
Comparing the last two formulas, one obtains $p(\nu)=\lambda q(\nu)$ on $\Gamma$, whence $p=\lambda q$. As a corollary, $(\star-\lambda)q=(\lambda^{-2}-1)du^{f}$ has zero periods, i.e. $\lambda T^i=\int_{l_i}\star q=\mathfrak{B}_{ij}T^j$.

\smallskip

So, it is shown that $\lambda\ne 0$ is an eigenvalue of $\mathfrak{B}$ if and only if $-\lambda$ is an eigenvalue of $H$ of the same multiplicity. To complete the proof, it remains to note that the spectrum of $H$ is symmetric with respect to the inversion $\lambda\mapsto-\lambda$ since $H\eta=\lambda\eta \ \Rightarrow \ H\overline{\eta}=\overline{\lambda\eta}=-\lambda\overline{\eta}$.
\end{proof}

\smallskip

Now, let $q=\sum_j T^jq_j\in\mathscr{D}$. Since $q$ is normal to $\partial M$, it admits the harmonic extension (which is still denoted by $q$) to the double $2M\supset M$ by the rule $\tau^*q=-q$. Then $(\star-i)q=\omega$ is an Abelian differential on $2M$. The periods of $\omega$ are given by 
$$\int_{l_i}\omega=\mathfrak{B}_{ij}T^j-iT^i, \quad \int_{\tau\circ l_i}\omega=\sum_j T^j\int_{\tau\circ l_i}(\star-i)q_j=\sum_j T^j\int_{l_i}(\star+i)q_j=\mathfrak{B}_{ij}T^j+iT^i$$
(here we used the equality $\tau^*\star=-\star\tau^*$ for the anti-holomorphic involution $\tau$). In particular, if $\mathfrak{B}T=\lambda T$, then
$$\int_{\tau\circ l_i}\omega=\frac{\lambda+i}{\lambda-i}\int_{l_i}\omega \qquad (i=1,\dots,m).$$
So, Proposition \ref{isospectrlity} leads to the following corollary.
\begin{cor}
Let $\lambda_{\pm k}=\pm i\mu_k$ be eigenvalues of the Hilbert transform $H$ of $M$ different from $0,\pm i$. Then in the space of the Abelian differentials on the double $2M$ of $M$ there is the basis $\{\omega_{k},\omega_{-k}\}_{k=1}^m$ with the periods obey
$$\int_{a_{m+i}}\omega_{\pm k}=\frac{\pm \mu_k+1}{\pm\mu_k-1}\int_{a_i}\omega_{\pm k}, \qquad \int_{b_{m+i}}\omega_{\pm k}=\frac{1\pm\mu_k}{1\mp\mu_k}\int_{b_i}\omega_{\pm k} \qquad (i,k=1,\dots,m)$$
in any symmetric homology basis on $2M$.
\end{cor}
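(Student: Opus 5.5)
The plan is to build the basis explicitly from the eigenvectors of $\mathfrak{B}$, using Proposition \ref{isospectrlity} and the period computation made just before the corollary.

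\textbf{Choice of eigenvectors.} By Proposition \ref{isospectrlity}, the eigenvalues of $\mathfrak{B}$ are the numbers $\lambda_{\pm k}=\pm i\mu_k$. For $\mathfrak{B}T_{\pm k}=\lambda_{\pm k}T_{\pm k}$ put
$$q_{\pm k}:=\sum_j T^j_{\pm k}q_j, \qquad \omega_{\pm k}:=(\star-i)q_{\pm k},$$
where $q_{\pm k}$ is extended to $2M$ by $\tau^*q=-q$. Each $\omega_{\pm k}$ is then an Abelian differential on $2M$. The relation $\int_{\tau\circ l_i}\omega=\frac{\lambda+i}{\lambda-i}\int_{l_i}\omega$ with $\lambda=\pm i\mu_k$ gives $\frac{\pm i\mu_k+i}{\pm i\mu_k-i}=\frac{\pm\mu_k+1}{\pm\mu_k-1}$.

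\textbf{Periods over $a$- and $b$-cycles.} For $l_i=a_i$ we have $\tau\circ a_i=a_{m+i}$ with the same parametrisation, which is exactly the first formula. For $l_{m+i}=b_i$ we have $b_{m+i}(t)=\tau\circ b_i(1-t)$, i.e. the orientation is reversed. Hence
$$\int_{b_{m+i}}\omega=-\frac{\pm\mu_k+1}{\pm\mu_k-1}\int_{b_i}\omega=\frac{1\pm\mu_k}{1\mp\mu_k}\int_{b_i}\omega,$$
which is the second formula. So the paper's symmetric-basis convention is what produces the stated form; I would check this sign consistency against the derivation of the $\tau$-period identity, which used $\tau^*\star=-\star\tau^*$.

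\textbf{Basis property.} I would proceed in three steps.

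1. \emph{Diagonalisability of $\mathfrak{B}$.} The eigenvalues of $H$ are semi-simple, and the multiplicities match by Proposition \ref{isospectrlity}, so $\mathfrak{B}$ has a full set of $2m$ eigenvectors $T_{\pm k}$. For repeated $\mu_k$ we choose a basis of each eigenspace.

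2. \emph{Injectivity of $T\mapsto(\star-i)q$.} The map $T\mapsto\omega$ is linear from $\mathbb{C}^{2m}$ into the $2m$-dimensional space of Abelian differentials on the genus-$2m$ surface $2M$. It is injective: if $\omega=0$, then $\star q=iq$, and since $q$ is real this forces $q=0$.

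3. \emph{Conclusion.} A linear bijection sends a basis to a basis, so $\{\omega_{\pm k}\}$ is a basis.

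\textbf{Main obstacle.} I expect the delicate point to be step 2, that $\omega$ is genuinely holomorphic on all of $2M$. This needs the extension of $q$ by $\tau^*q=-q$ to be harmonic across $\Gamma$, which holds because $q(\gamma)=0$ (reflection principle), and the realness of $q$ in step 2 needs real coefficients. For complex $T$, I would instead argue on the real and imaginary parts of $q$ separately, or note that $\omega=0$ makes all periods $\mathfrak{B}T-iT$ and $\mathfrak{B}T+iT$ vanish, so $T=0$. The latter is the cleanest route and is what I would use.

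Finally, "in any symmetric homology basis" follows because the construction used only the symmetry relations of the chosen basis; the spaces $\mathscr{D}$ and the eigen-differentials do not depend on that choice.
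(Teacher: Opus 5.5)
Your proposal is correct and follows the paper's own route. You take eigenvectors $T$ of $\mathfrak{B}$ (via Proposition~\ref{isospectrlity}), form $\omega=(\star-i)q$ on $2M$ with $\tau^*q=-q$, and read off the periods from $\tau^*\star=-\star\tau^*$, giving $\int_{\tau\circ l_i}\omega=\frac{\lambda+i}{\lambda-i}\int_{l_i}\omega$. The details you add are ones the paper leaves implicit, and they are sound: the sign flip for $b$-cycles from $b_{m+i}(t)=\tau\circ b_i(1-t)$, diagonalisability of $\mathfrak{B}$ from semi-simplicity, and injectivity of $T\mapsto\omega$ via the periods $\mathfrak{B}T\mp iT$ (rather than the ``$q$ real'' argument, which fails for complex $T$).
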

Now, suppose that $\lambda_m$ is close to $i$; then on $2M$ there is the Abelian differential $\omega_{-m}$ whose periods on $M$ (thus, its $L_2$-norm) are of order one while the periods on $\tau(M)$ are small (of order $|\lambda_m-i|$). This can be interpreted as a kind of degeneration of $2M$ since any differential $\omega$ on any double $2M$ with all zero periods on $M$ (equivalently, on $\tau(M)$) is zero. Indeed, otherwise $\omega=dw$ on $M$, where $w$ is holomorphic. At the same time, $\omega=\omega_++i\omega_-$ on $2M$, where $\omega_+=(\omega+\overline{\tau^*\omega})/2$, $\omega_-=(\omega-\overline{\tau^*\omega})/2i$ obey $\tau^*\omega=\overline{\omega}$. The latter means that the $\Re \omega_\pm$ are tangent ($\Re \omega_\pm(\nu)=0$) and $\Im \omega_\pm=\star\Re\omega_\pm$ are normal to $\partial M\equiv\Gamma$. Thus, $dw=q+\star p$ on $M$, where $q,p\in\mathscr{D}$. Due to orthogonal decomposition (\ref{ortodecomp}) and the $L_2(M;T^*M)$-orthogonality of the spaces $\mathscr{D}$ and $\star\mathscr{D}$, one has $q=0$. Since $w$ is holomorphic, one has $dw=i\star dw=-ip$, whence $p=0$ and $w=0$. Thus, $\omega=0$ on $2M$.

\smallskip

\paragraph{Acknowledgements.} The author is grateful to Prof. Sergei Ivanov for pointing out the result of \cite{AKKYLT} which significantly simplified the original proof of Proposition \ref{main prop} made in \cite{ArXiv orig}. Also, the author is grateful to Prof. Gaiane Panina for valuable consultations on the moduli spaces of hyperbolic surfaces.

\smallskip

\paragraph{Funding.} The author was supported by the Ministry of Science and Higher Education of the Russian Federation (agreement 075-15-2025-344 dated 29/04/2025 for Saint Petersburg Leonhard Euler International Mathematical Institute at PDMI RAS).

\end{document}